\makeatletter \linespread{1.2}
\newtheorem{definition}{Definition}
\newcommand{\brdef}{\begin{definition}}
\newcommand{\erdef}{\end{definition}}
\newtheorem{corollary}{Corollary}
\newcommand{\bcorollary}{\begin{corollary}}
\newcommand{\ecorollary}{\end{corollary}}
\newtheorem{theorem}{Theorem}
\newcommand{\bth}{\begin{theorem}}
\newcommand{\eth}{\end{theorem}}
\newtheorem{lemma}{Lemma}
\newcommand{\ble}{\begin{lemma}}
\newcommand{\ele}{\end{lemma}}
\newlength{\defbaselineskip}
\newcommand{\ds}{\displaystyle}
\newcommand{\mC}{{\mathbb C}}
\newcommand{\mU}{{\mathbb U}}
\newcommand{\cA}{{\mathcal A}}
\newcommand{\cS}{{\mathcal S}}
\begin{document}
\title[Univalent functions with positive
coefficients...]
{Univalent functions with positive
coefficients \\involving Touchard polynomials}
 \author{G.MURUGUSUNDARAMOORTHY$^{1}$ and Saurabh Porwal$^{2,\; \ast}$ }
 \maketitle
\begin{center}

$^{1}$ School of  Advanced Sciences,
\\VIT University,\\ Vellore - 632014, India. \\
{ email:gmsmoorthy@yahoo.com}
\end{center}
\begin{center}
${}^{2,\; \ast}$Department of Mathematics \\
Ram Sahai Government Degree College,\\
 Bairi-Shivrajpur-Kanpur-209205, (U.P.), India\end{center}
\begin{abstract}The purpose of the present paper is to establish connections between various subclasses of analytic
univalent functions by applying certain convolution operator involving Touchard polynomials. To be more precise, we investigate
such connections with the classes of analytic univalent functions
with positive coefficients in the open
unit disk $\mathbb{U}.$
 \
\\
\
\\
2000 Mathematics Subject Classification: 30C45. \
\\
\
\\
{\it Keywords and Phrases}: Univalent, Starlike functions, Convex functions,
Hadamard product, Touchard polynomials.
series.
\end{abstract}
\maketitle
\section{INTRODUCTION}

\par The application of special function on Geometric Function Theory is a current and interesting topic of research. It is frequently applied in various branches Mathematics, Physics, Sciences, Engineering and Technology. The surprising use of generalized hypergeometric function by L. de Branges \cite{deBranges} in the solution  of the famous Bieberbach
conjecture.  There is an extensive
literature dealing with analytical and geometric properties of various
types of special functions, especially for the generalized,
Gaussian hypergeometric functions \cite{cho,merkes,mostafa,silver,HMSSS}.
\par The Touchard polynomials, studied by Jacques Touchard \cite{05}, also called the exponential generating polynomials (see \cite{02}, \cite{03}, \cite{04}) or Bell polynomials (see \cite{00}) comprise a polynomial sequence of binomial type that for $ X $ is a random variable with a Poisson distribution with expected value $\ell$, then its nth moment is $E(X_{\kappa}) = \mathcal{T}(\kappa,\ell)$, leading to the form:
\begin{equation}\label{GMST1}
  \mathcal{T}(\kappa,\ell)= e^{\kappa}\sum\limits_{n=0}^{\infty} \frac{\kappa^{n}n^{\ell}}{n!}z^n
\end{equation}
Touchard polynomials coefficients after the second force as following
\par Lately,  introduce Touchard polynomials coefficients after the second force as following
\begin{equation}\label{PHI}
\Phi_{\kappa}^{\ell}(z)=z+\sum\limits_{n=2}^{\infty} \frac{(n-1)^{\ell}\kappa^{n-1}}{(n-1)!}e^{-\kappa}z^n,\qquad z\in \mathbb{D},\end{equation} where $\ell\geq 0;\kappa>0 $ and we note that, by ratio test the radius of convergence of above series is infinity.
Let $\mathcal H$ be the class of functions analytic in the unit disk
$\mathbb U=\{z\in \mathbb C: |z|<1\}$. Let $\cA$ be the class of
functions $f\in\mathcal H$ of the form
\begin{equation}\label{eq1.1}
   f(z) = z + \sum_{n=2}^\infty a_n z^n,\ \ \ z\in\mathbb U.
\end{equation}
 We also let $\cS$  be the subclass of $\mathcal{A}$ consisting of
functions which are  normalized by $f(0)=0=f'(0)-1$ and also
univalent in $\mathbb{U}.$
\par Denote by  $\mathcal{V}$ the subclass of $\mathcal{A}$ consisting of functions of the form\begin{equation}\label{eq1.5}
 f(z) = z + \sum_{n=2}^\infty a_n z^n , a_n\geq 0.\end{equation}
For functions $f\in \mathcal{A}$ given by \eqref{eq1.1} and $g\in
\mathcal{A}$ given by $g(z)=z + \sum_{n=2}^{\infty}b_n z^n, $ we
define the Hadamard product (or convolution) of $f$ and $g$ by
\begin{equation}\label{e1.2}
  (f*g)(z) = z + \sum\limits_{n=2}^{\infty} a_n b_n z^n, \,\,\,\, z
  \in \mU.
\end{equation}

Now, we define the linear operator
\begin{equation*}
    \mathcal{I}(l,m,z):\mathcal{A}\rightarrow\mathcal{A}
\end{equation*}
defined by the convolution or hadamard product
\begin{equation}\label{I}
  \mathcal{ I}(l,m,z)f=\Phi_{m}^{\ell}(z)\ast f(z)
=z+\sum_{n=2}^{\infty}\frac{(n-1)^{l}m^{n-1}}{(n-1)!}e^{-m}a_nz^n,
\end{equation}
where $\Phi_{m}^{\ell}(z)$ is the  series given by (\ref{PHI}).
\par The class $\mathcal M(\alpha)$ of starlike functions of order
$1< \alpha \leq\frac{4}{3}$
\begin{equation*}
    {\mathcal M}(\alpha):=\left\{ f\in {\mathcal A}:\ {\rm \Re}\frac{zf'(z)}{f(z)}
    <\alpha ,\ z\in\mathbb U\right \}
\end{equation*}and
the class ${\mathcal N}(\alpha)$ of convex functions of order $1< \alpha \leq\frac{4}{3}$
\begin{align*}
   {\mathcal N}(\alpha):&
   =\left\{ f\in {\mathcal A}:\ {\rm \Re
   }\left(1+\frac{zf''(z) }{f'(z)}\right)
   <\alpha ,\ z\in\mathbb U\right \}= \left\{f\in \mathcal{A}:\ zf'\in
   {\mathcal M}{(\alpha) }\right\}
\end{align*}
were introduced by Uralegaddi et al.\cite{ural1} (see\cite{DC,Dp}).Also let ${\mathcal M}^*(\alpha)\equiv{\mathcal M}(\alpha)\cap \mathcal{V}$
and ${\mathcal N}^*(\alpha)\equiv{\mathcal N}(\alpha)\cap \mathcal{V}.$

\par In this paper we introduce two new subclasses of $\mathcal{S}$ namely
 $\mathcal{M}(\lambda,\alpha)$ and $\mathcal{N}(\lambda,\alpha)$  to discuss some inclusion properties.

\par For some $\alpha$~$(1< \alpha \leq\frac{4}{3})$ and $ \lambda$($0 \leq \lambda<1)$,
we let    $\mathcal{M}(\lambda,\alpha)$ and
$\mathcal{N}(\lambda,\alpha)$  be two new subclass of $\mathcal{S}$
consisting of functions of the form \eqref{eq1.1} satisfying the
analytic criteria
\begin{equation}\label{e0.1}
   \mathcal{M}(\lambda,\alpha) := \left\{ f \in \cS :\Re\left(\frac{zf'(z)}{(1-\lambda)f(z)+\lambda zf'(z)} \right)
  < \alpha,\, \, z \in \mathbb{U} \right\}.
\end{equation}
\begin{equation}\label{e0.0}
    \mathcal{N}(\lambda,\alpha) := \left\{ f \in \cS :\Re\left(\frac{f'(z)+ zf''(z) }{f'(z)
    + \lambda zf''(z)} \right) < \alpha,\, \, z \in
    \mathbb{U}\right\}.
\end{equation}
We also  let $ \mathcal{M}^*(\lambda,\alpha)\equiv \mathcal{M}(\lambda,\alpha)\cap \mathcal{V}$ and $\mathcal{N}^*(\lambda,\alpha)\equiv \mathcal{N}(\lambda,\alpha)\cap \mathcal{V}$.
\par Note that   $\mathcal{M}(0,\alpha)= \mathcal{M}(\alpha),$~~$\mathcal{N}(0,\alpha)=\mathcal{N}(\alpha); $ $\mathcal{M}^*(\alpha)$~~
and $\mathcal{N}^*(\alpha)$~~ the subclasses of  studied by Uralegaddi et al.\cite{ural1}.


\par  Motivated by
results on connections between various subclasses of analytic
univalent functions by using hypergeometric functions (see
\cite{cho,merkes,mostafa,silver,HMSSS}), we obtain necessary and sufficient condition
for function $\Phi(l,m,z)$ to be in the classes
$\mathcal{M}(\lambda,\alpha)$ , $\mathcal{N}(\lambda,\alpha) $ and connections between
$\mathcal{R}^\tau (A,B)$ by applying convolution operator.
\section{Preliminary Results}
To prove our main results we shall require the following definitions and lemmas.
\begin{definition}
The $l^{th}$ moment of the Poisson distribution is defined as
\[
\mu_l^{'}=\sum_{n=0}^{\infty}\frac{n^lm^n}{n!}e^{-m}.\]
\end{definition}


\begin{lemma}\label{gms1}
For some $\alpha$~$(1< \alpha \leq\frac{4}{3})$ and $ \lambda(0 \leq \lambda<1)$, and if $f\in\mathcal{V}$ then $f\in\mathcal{M}^*(\lambda,\alpha)$ if and only if
\begin{equation}\label{2t1}
   \sum_{n=2}^\infty[n-(1+n\lambda-\lambda)\alpha]|a_n|\leq \alpha-1.
\end{equation}
\end{lemma}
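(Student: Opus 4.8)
The plan is to prove the two implications separately, writing $c_n := 1+\lambda(n-1) = 1+n\lambda-\lambda$ so that the asserted estimate reads $\sum_{n=2}^\infty (n-\alpha c_n)a_n \le \alpha-1$, and using that $a_n\ge 0$ forces $|a_n|=a_n$. Throughout I set $A:=zf'(z)$ and $B:=(1-\lambda)f(z)+\lambda zf'(z)$, so that $f\in\mathcal{M}^*(\lambda,\alpha)$ means precisely $\Re(A/B)<\alpha$ on $\mathbb{U}$. A direct expansion using \eqref{eq1.5} gives $A-B=(1-\lambda)\sum_{n\ge 2}(n-1)a_n z^n$ and $B=z+\sum_{n\ge 2}c_n a_n z^n$, which are the two pieces every estimate below rests on.

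For necessity I would exploit that all coefficients are real and nonnegative. Restricting to $z=r\in(0,1)$ makes $A/B$ real and the denominator $B=(1-\lambda)f(r)+\lambda rf'(r)$ strictly positive, so the defining inequality $\Re(A/B)<\alpha$ becomes $rf'(r)<\alpha\big[(1-\lambda)f(r)+\lambda rf'(r)\big]$. Collecting powers of $r$ turns this into $\sum_{n\ge 2}(n-\alpha c_n)a_n r^{n-1}<\alpha-1$, and letting $r\to 1^{-}$ yields the claimed coefficient inequality. The only point needing care is the termwise passage to the limit, which I would justify by Abel's theorem on the convergent power series along $(0,1)$.

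For sufficiency I would use the elementary fact that, since $1<\alpha$, the half-plane $\Re w<\alpha$ is exactly the set of points closer to $1$ than to its reflection $2\alpha-1$ across the line $\Re w=\alpha$; that is, $\Re(A/B)<\alpha$ is equivalent to $|A-B|<|A-(2\alpha-1)B|$. I would bound the left side above by $(1-\lambda)\sum_{n\ge 2}(n-1)a_n|z|^n$, and bound the right side below, via the reverse triangle inequality applied to $A-(2\alpha-1)B=-2(\alpha-1)z+\sum_{n\ge 2}\big(n-(2\alpha-1)c_n\big)a_n z^n$, by $2(\alpha-1)|z|-\sum_{n\ge 2}|n-(2\alpha-1)c_n|\,a_n|z|^n$. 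Dividing by $|z|$ and letting $|z|\to 1^{-}$, sufficiency follows from $\sum_{n\ge 2}\big[(1-\lambda)(n-1)+|n-(2\alpha-1)c_n|\big]a_n\le 2(\alpha-1)$, which collapses onto the stated bound through the algebraic identity $(1-\lambda)(n-1)+\big(n-(2\alpha-1)c_n\big)=2(n-\alpha c_n)$, verified by expanding both sides in $n-1$.

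The main obstacle is bookkeeping the sign of $n-(2\alpha-1)c_n$: over the admissible range $1<\alpha\le\frac{4}{3}$, $0\le\lambda<1$ this quantity need not keep a fixed sign as $n$ varies, so the absolute value in the reverse-triangle bound must be resolved rather than dropped. When $n-(2\alpha-1)c_n\ge 0$ the identity above delivers exactly the factor $2(n-\alpha c_n)$ and the estimate closes with the sharp constant; in the complementary case the reflection bound is no longer tight, and one reconciles it with the real-axis computation of the necessity half, which already identifies $r\to 1^{-}$ as the extremal configuration and hence pins down $\alpha-1$ as the correct threshold. I would therefore present the sufficiency estimate in the regime where the reflection bound is exact and lean on the necessity direction to certify sharpness, flagging this sign analysis as the delicate step of the argument.
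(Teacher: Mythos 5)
The paper states Lemma~\ref{gms1} without proof---it is imported from the Uralegaddi--Dixit line of results on classes with positive coefficients---so there is no in-paper argument to compare against; your route (restriction to the real axis for necessity, the half-plane $\Re w<\alpha$ rewritten as $|w-1|<|w-(2\alpha-1)|$ for sufficiency) is exactly the standard one for lemmas of this type. The necessity half is sound: with $a_n\ge 0$ the denominator $B(r)\ge r>0$, the inequality $\sum_{n\ge2}(n-\alpha c_n)a_nr^{n-1}<\alpha-1$ follows, and only the passage $r\to1^-$ needs care. Note, though, that Abel's theorem as you invoke it presupposes convergence of $\sum(n-\alpha c_n)a_n$, which is not known in advance; the clean way is to split the series according to the sign of $n-\alpha c_n$ and use monotone convergence on each piece, or to work with partial sums. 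That is routine.

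The genuine gap is exactly where you flag it, and your proposed repair does not close it. Your sufficiency estimate yields the stated bound only when $n-(2\alpha-1)c_n\ge 0$ for every $n\ge 2$. This expression is linear in $n$ with value $2-(2\alpha-1)(1+\lambda)$ at $n=2$ and slope $1-(2\alpha-1)\lambda$, and it can be negative inside the stated parameter range: for $\alpha=\tfrac43$, $\lambda=\tfrac9{10}$ one gets $2\alpha-1=\tfrac53$ and $n-(2\alpha-1)c_n<0$ for all $n\ge2$. (In the classical case $\lambda=0$ the hypothesis $\alpha\le\tfrac43$ forces $n-(2\alpha-1)=n-2\alpha+1>0$ for $n\ge2$, which is precisely why the Uralegaddi-type proof closes; introducing $\lambda$ destroys that guarantee.) Your fallback---``lean on the necessity direction to certify sharpness''---is not a proof: necessity establishes that membership implies the coefficient bound and cannot be converted into its converse, and identifying $r\to1^-$ as extremal says nothing about whether the coefficient bound forces $\Re(A/B)<\alpha$ at every point of $\mathbb U$. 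As written, your argument proves sufficiency only under the additional restriction $(2\alpha-1)(1+\lambda)\le 2$. You must either impose such a restriction, supply a different argument for the remaining parameters, or point out that the lemma itself is questionable there: when $1-\alpha\lambda<0$ every coefficient $n-\alpha c_n$ is negative, so \eqref{2t1} holds for \emph{every} $f\in\mathcal V$ and the sufficiency claim amounts to $\mathcal V\cap\mathcal S\subset\mathcal M^*(\lambda,\alpha)$, which would need its own justification. A smaller omission in the same half: $\mathcal M^*(\lambda,\alpha)\subset\mathcal S$ by definition, so sufficiency also owes an argument (or a remark) that the coefficient condition yields univalence.
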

\begin{lemma}\label{gms1b}
For some $\alpha$~$(1< \alpha \leq\frac{4}{3})$ and $ \lambda(0 \leq \lambda<1)$, and if $f\in\mathcal{V}$ then $f\in\mathcal{N}^*(\lambda,\alpha)$ if and only if
\begin{equation}\label{eqbreaz}
   \sum_{n=2}^{\infty}n[n-(1+n\lambda-\lambda)\alpha]a_n\leq \alpha-1 .
\end{equation}
\end{lemma}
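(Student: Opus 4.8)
The plan is to deduce this lemma from Lemma \ref{gms1} by exploiting the standard Alexander-type relation between the classes $\mathcal{N}(\lambda,\alpha)$ and $\mathcal{M}(\lambda,\alpha)$, rather than repeating the full estimate from scratch. First I would unwind the algebraic identity hidden in the definitions \eqref{e0.1} and \eqref{e0.0}. Setting $g(z)=zf'(z)$, so that $g'(z)=f'(z)+zf''(z)$ and $zg'(z)=z\bigl(f'(z)+zf''(z)\bigr)$, a direct computation of the denominator gives $(1-\lambda)g(z)+\lambda z g'(z)=z\bigl(f'(z)+\lambda z f''(z)\bigr)$, whence
\begin{equation*}
\frac{zg'(z)}{(1-\lambda)g(z)+\lambda z g'(z)}
=\frac{z\bigl(f'(z)+zf''(z)\bigr)}{z\bigl(f'(z)+\lambda z f''(z)\bigr)}
=\frac{f'(z)+zf''(z)}{f'(z)+\lambda z f''(z)}.
\end{equation*}
Thus the defining inequality for $\mathcal{N}(\lambda,\alpha)$ applied to $f$ is exactly the defining inequality for $\mathcal{M}(\lambda,\alpha)$ applied to $g=zf'$; that is, $f\in\mathcal{N}(\lambda,\alpha)$ if and only if $zf'\in\mathcal{M}(\lambda,\alpha)$.

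Next I would check that this correspondence respects the class $\mathcal{V}$. If $f\in\mathcal{V}$ is given by \eqref{eq1.5} with $a_n\geq 0$, then $g(z)=zf'(z)=z+\sum_{n=2}^{\infty} n a_n z^n$ again has nonnegative coefficients $b_n=na_n\geq 0$, so $g\in\mathcal{V}$, and the passage is reversible since $a_n=b_n/n$. Combining this with the identity above yields the equivalence $f\in\mathcal{N}^*(\lambda,\alpha)$ if and only if $g=zf'\in\mathcal{M}^*(\lambda,\alpha)$.

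Finally I would apply Lemma \ref{gms1} to $g$. By \eqref{2t1}, the membership $g\in\mathcal{M}^*(\lambda,\alpha)$ is equivalent to $\sum_{n=2}^{\infty}\bigl[n-(1+n\lambda-\lambda)\alpha\bigr]\,b_n\leq \alpha-1$, and substituting $b_n=na_n$ turns this into
\begin{equation*}
\sum_{n=2}^{\infty} n\bigl[n-(1+n\lambda-\lambda)\alpha\bigr]\,a_n\leq \alpha-1,
\end{equation*}
which is precisely \eqref{eqbreaz}.

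I expect the only delicate point to be the verification of the algebraic identity relating the two quotients, together with confirming that the normalization and univalence requirements built into $\mathcal{S}$ remain consistent under $f\mapsto zf'$; since the relevant membership is governed by the coefficient inequalities of Lemmas \ref{gms1} and \ref{gms1b} (which themselves force univalence), this causes no real difficulty, and once the reduction is set up the bound follows with no further estimation. As an alternative, the inequality can be proved directly by imitating the proof of Lemma \ref{gms1}: bound $\Re(\cdot)<\alpha$ by letting $z\to 1^{-}$ along the real axis and using $\Re(w)\leq |w|$ together with the positivity of the coefficients, which is the main estimate already carried out in the $\mathcal{M}^*$ case.
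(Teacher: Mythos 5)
Your proposal is correct, but note that the paper itself offers no proof of this lemma at all: Lemmas \ref{gms1} and \ref{gms1b} are stated in the ``Preliminary Results'' section as known facts (they go back to Uralegaddi et al.\ \cite{ural1} and the related papers \cite{DC,Dp}), so there is no in-paper argument to compare against. Your reduction via the Alexander-type transform $f\mapsto zf'$ is the standard and most economical route, and the algebra checks out: $(1-\lambda)zf'(z)+\lambda z\bigl(f'(z)+zf''(z)\bigr)=z\bigl(f'(z)+\lambda zf''(z)\bigr)$, so the defining quotient for $\mathcal{N}(\lambda,\alpha)$ at $f$ equals the defining quotient for $\mathcal{M}(\lambda,\alpha)$ at $g=zf'$; this is consistent with the paper's own remark that $\mathcal{N}(\alpha)=\{f:zf'\in\mathcal{M}(\alpha)\}$ in the case $\lambda=0$. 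The coefficient bookkeeping $b_n=na_n\geq 0$ then converts \eqref{2t1} into \eqref{eqbreaz} exactly. The one point you correctly flag but treat somewhat lightly is that both classes are defined as subclasses of $\mathcal{S}$, and $f\mapsto zf'$ does not preserve univalence in general; to invoke Lemma \ref{gms1} for $g$ you need $g\in\mathcal{S}$. This is harmless here because for $1<\alpha\leq\frac{4}{3}$ the coefficient condition \eqref{2t1} forces $\sum_{n\geq 2}n\,b_n/n\cdot n=\sum n b_n$ to be small enough that $\sum_{n\geq2}n|b_n|\leq 1$, which is the classical sufficient condition for univalence, but a careful write-up should say this explicitly rather than gesture at it. Your fallback suggestion (a direct estimate letting $z\to1^{-}$ on the real axis, mimicking the usual proof for $\mathcal{M}^*$) is also the argument one finds in \cite{ural1}.
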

\section{Main Results}
\par For convenience throughout in the sequel, we use
the following notations:
\begin{eqnarray}
       \sum_{n=2}^{\infty} \frac{m^{n-1}}{(n-1)!} &=& e^m-1 \\
        \sum_{n=2}^{\infty} \frac{m^{n-1}}{(n-2)!} &=& me^m \\
       \sum_{n=2}^{\infty} \frac{m^{n-1}}{(n-3)!} &=& m^2e^m \\
       \end{eqnarray}


\begin{theorem}\label{gms1a}
If  $ m>0(m\neq0,-1,-2,\dots)$, $l\in N_{0}$ then $\Phi(l,m,z)\in\mathcal{M}^*(\lambda,\alpha)$ if and only if
\begin{equation}\label{c1}
{e^{- m}}\left\{ {\begin{array}{*{20}{c}}
   {\left( 1 - \alpha \lambda \right)\mu _{l + 1}^{'} + \left( 1 - \alpha  \right)\mu _{l}^{'},~~~ l \ge 1}  \\
   {\left( {1 - \alpha \lambda } \right)m{e^m} + \left( {1 - \alpha } \right)\left( {{e^{m}} - 1} \right), ~~~ l = 0}  \\
 \end{array} } \right. \leq \alpha-1.
\end{equation}
\end{theorem}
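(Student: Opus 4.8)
The plan is to reduce the whole statement to the coefficient test of Lemma~\ref{gms1}. Because $m>0$ and $l\in N_{0}$, every coefficient of $\Phi(l,m,z)=\Phi_{m}^{l}(z)$, read off from \eqref{PHI},
\[
a_{n}=\frac{(n-1)^{l}m^{n-1}}{(n-1)!}\,e^{-m},
\]
is nonnegative, so $\Phi(l,m,z)\in\mathcal{V}$ and $|a_{n}|=a_{n}$. By Lemma~\ref{gms1}, membership in $\mathcal{M}^{*}(\lambda,\alpha)$ is then \emph{equivalent} to the single inequality
\[
S:=\sum_{n=2}^{\infty}\bigl[n-(1+n\lambda-\lambda)\alpha\bigr]a_{n}\le\alpha-1,
\]
so that the entire task becomes the evaluation of the series $S$ in closed form.

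First I would simplify the factor in brackets. Since $1+n\lambda-\lambda=1+\lambda(n-1)$, writing $k=n-1$ gives
\[
n-(1+n\lambda-\lambda)\alpha=(1-\alpha\lambda)(n-1)+(1-\alpha)=(1-\alpha\lambda)k+(1-\alpha).
\]
Inserting the explicit $a_{n}$ and shifting the summation to $k=n-1\ge1$ splits $S$ into two power series,
\[
S=e^{-m}(1-\alpha\lambda)\sum_{k=1}^{\infty}\frac{k^{l+1}m^{k}}{k!}
+e^{-m}(1-\alpha)\sum_{k=1}^{\infty}\frac{k^{l}m^{k}}{k!},
\]
each of exactly the form occurring in the Poisson moment of Definition~2.1. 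All these rearrangements are legitimate because the series \eqref{PHI} has infinite radius of convergence and hence converges absolutely.

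The next step is to recognise the two sums. For $l\ge1$ both exponents $l+1$ and $l$ are at least $1$, so the $k=0$ term of each series vanishes and the summation may be extended down to $k=0$ without changing its value; this identifies the two inner series with the moment sums $\mu'_{l+1}$ and $\mu'_{l}$ appearing inside the brace of \eqref{c1}, the common factor $e^{-m}$ being carried outside, and yields the first branch. The case $l=0$ must be handled separately, because then the second series carries the exponent $k^{0}=1$, whose $k=0$ term no longer vanishes and cannot be absorbed into the moment notation. Here I would instead evaluate the two elementary series directly: with $l=0$ they reduce to $\sum_{k\ge1}k\,m^{k}/k!=me^{m}$ and $\sum_{k\ge1}m^{k}/k!=e^{m}-1$, which are precisely the elementary exponential sums recorded at the opening of this section; substituting these produces the second branch of \eqref{c1}.

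Finally, feeding either branch back into $S\le\alpha-1$ reproduces \eqref{c1} verbatim, and since Lemma~\ref{gms1} is an \emph{if and only if}, the stated condition is simultaneously necessary and sufficient. I expect the only genuine point of care to be the boundary case $l=0$, together with the bookkeeping of the prefactor $e^{-m}$ and the index shift $k=n-1$; the remaining manipulations are routine rearrangements of absolutely convergent series.
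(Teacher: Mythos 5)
Your proposal is correct and follows essentially the same route as the paper's own proof: reduce to the coefficient inequality of Lemma~\ref{gms1}, rewrite $n-(1+n\lambda-\lambda)\alpha=(1-\alpha\lambda)(n-1)+(1-\alpha)$, shift the index, and identify the resulting series with $\mu'_{l+1}$, $\mu'_{l}$ (for $l\ge1$) or with the elementary sums $me^{m}$ and $e^{m}-1$ (for $l=0$). Your explicit justification of why the $l=0$ branch must be treated separately (the $k=0$ term of $k^{0}$ no longer vanishes) is a welcome clarification that the paper leaves implicit.
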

\begin{proof}
To prove that $\Phi(l,m,z)\in\mathcal{M}^*(\lambda,\alpha)$, then by virtue of Lemma \ref{gms1}, it suffices to show that

\begin{equation}\label{thm4e11}
    \sum_{n=2}^{\infty} [n-(1+n\lambda-\lambda)\alpha]\frac{(n-1)^lm^{n-1}}{(n-1)!}e^{-m}\leq \alpha-1.
\end{equation}
Now
\begin{eqnarray*}
&~& e^{-m}\sum_{n=2}^{\infty} [n(1-\lambda\alpha)-\alpha(1-\lambda)]\frac{(n-1)^lm^{n-1}}{(n-1)!}\\
      &=&e^{-m}\sum_{n=2}^{\infty}[(n-1)(1-\lambda\alpha)+1-\alpha]\frac{(n-1)^lm^{n-1}}{(n-1)!}\\
  &=&e^{-m}\sum_{n=1}^{\infty}[n(1-\lambda\alpha)+1-\alpha]\frac{n^lm^{n}}{n!}\\
&=&e^{-m}\sum_{n=1}^{\infty}[(1-\lambda\alpha)\frac{n^{l+1}m^{n}}{n!}+(1-\alpha)\frac{n^lm^{n}}{n!}]\\
&=&{e^{- m}}\left\{ {\begin{array}{*{20}{c}}
   {\left( 1 - \alpha \lambda \right)\mu _{l + 1}^{'} + \left( 1 - \alpha  \right)\mu _{l}^{'},~~~ l \ge 1}  \\
   {\left( {1 - \alpha \lambda } \right)m{e^m} + \left( {1 - \alpha } \right)\left( {{e^{m}} - 1} \right), ~~~ l = 0}  \\
 \end{array} } \right.\end{eqnarray*}

But this expression is bounded above by $\alpha-1$ if and only if
\eqref{c1} holds. Thus the proof is complete.\end{proof}


\begin{theorem}\label{gms2a}
If  $ m>0(m\neq0,-1,-2,\dots)$, $l\in N_{0}$ then $\Phi(l,m,z)\in\mathcal{N}^*(\lambda,\alpha)$ if and only if
\begin{equation}\label{c2}
{e^{- m}}\left\{ {\begin{array}{*{20}{c}}
   {\left( 1 - \alpha \lambda \right)\mu _{l + 2}^{'}+\left( 2- \alpha \lambda-\alpha \right)\mu _{l + 1}^{'} + \left( 1 - \alpha  \right)\mu _{l}^{'},~~~ l \ge 1}  \\
   {\left( {1 - \alpha \lambda } \right)(m^2+m){e^m} +\left( 2- \alpha \lambda-\alpha \right)me^m+ \left( {1 - \alpha } \right)\left( {{e^{m}} - 1} \right), ~~~ l = 0}  \\
 \end{array} } \right. \leq \alpha-1.
\end{equation}
\end{theorem}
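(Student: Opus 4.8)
The plan is to imitate, line for line, the proof of Theorem \ref{gms1a}, but with Lemma \ref{gms1} replaced by its convex counterpart Lemma \ref{gms1b}. Writing $a_n=\frac{(n-1)^l m^{n-1}}{(n-1)!}e^{-m}$ for the coefficients of $\Phi(l,m,z)$, Lemma \ref{gms1b} tells us that $\Phi(l,m,z)\in\mathcal{N}^*(\lambda,\alpha)$ holds \emph{if and only if}
\[
e^{-m}\sum_{n=2}^{\infty} n\bigl[n-(1+n\lambda-\lambda)\alpha\bigr]\frac{(n-1)^l m^{n-1}}{(n-1)!}\leq \alpha-1 .
\]
So the whole task reduces to evaluating this single series in closed form and checking that it coincides with the left-hand side of \eqref{c2}.

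The one genuinely new step — the only place where the argument differs in substance from Theorem \ref{gms1a} — is the decomposition of the weight $n[n-(1+n\lambda-\lambda)\alpha]$ as a quadratic polynomial in $(n-1)$. Starting from the linear identity already used in the starlike case, namely $n-(1+n\lambda-\lambda)\alpha=(n-1)(1-\lambda\alpha)+(1-\alpha)$, and writing the extra outer factor as $n=(n-1)+1$, I would obtain
\[
n\bigl[n-(1+n\lambda-\lambda)\alpha\bigr]=(1-\lambda\alpha)(n-1)^2+(2-\alpha-\lambda\alpha)(n-1)+(1-\alpha).
\]
This is the degree-two analogue of the degree-one identity in Theorem \ref{gms1a}; the extra degree comes precisely from the factor of $n$ that the convexity criterion \eqref{eqbreaz} carries.

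With this in hand I would substitute and shift the summation index by replacing $(n-1)$ with $n$, so that the index now runs from $1$, and split the series into three pieces proportional to $\frac{n^{l+2}m^{n}}{n!}$, $\frac{n^{l+1}m^{n}}{n!}$ and $\frac{n^{l}m^{n}}{n!}$, with coefficients $1-\lambda\alpha$, $2-\alpha-\lambda\alpha$ and $1-\alpha$ respectively. For $l\geq 1$ the $n=0$ term is absent from each piece, so each sum is exactly $e^{m}$ times the corresponding moment $\mu_{l+2}^{'},\mu_{l+1}^{'},\mu_{l}^{'}$, producing the first branch of \eqref{c2}. For $l=0$ the three sums become $\sum \frac{n^{2}m^{n}}{n!}$, $\sum \frac{n\,m^{n}}{n!}$ and $\sum \frac{m^{n}}{n!}$, which I would evaluate via $n^{2}=n(n-1)+n$ together with the elementary summation identities recorded at the beginning of Section 3, giving $(m^{2}+m)e^{m}$, $m e^{m}$ and $e^{m}-1$; this yields the second branch of \eqref{c2}. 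Since Lemma \ref{gms1b} is an equivalence, the resulting equality of the computed sum with the left-hand side of \eqref{c2} delivers the stated ``if and only if'' at once. I do not expect any real obstacle here: once the quadratic decomposition above is written down, the remainder is the routine bookkeeping of the index shift and the two-case evaluation of the moment sums, exactly parallel to the preceding theorem.
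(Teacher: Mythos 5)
Your proposal is correct and follows essentially the same route as the paper's own proof: invoke Lemma \ref{gms1b}, rewrite the weight via the identity $n[n-(1+n\lambda-\lambda)\alpha]=(1-\lambda\alpha)(n-1)^2+(2-\alpha-\lambda\alpha)(n-1)+(1-\alpha)$, shift the index, and identify the three resulting series with the moments $\mu_{l+2}^{'},\mu_{l+1}^{'},\mu_{l}^{'}$ (or with $(m^2+m)e^m$, $me^m$, $e^m-1$ when $l=0$). The quadratic decomposition you single out as the key new step is exactly the one the paper uses, so no further comment is needed.
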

\begin{proof}
To prove that $\Phi(l,m,z)\in\mathcal{N}^*(\lambda,\alpha)$, then by virtue of Lemma \ref{gms1b}, it suffices to show that

\begin{equation}\label{thm4e11}
    \sum_{n=2}^{\infty}n[n-(1+n\lambda-\lambda)\alpha]\frac{(n-1)^lm^{n-1}}{(n-1)!}e^{-m}\leq \alpha-1.
\end{equation}
Now
\begin{eqnarray*}
&~& e^{-m}\sum_{n=2}^{\infty} n[n(1-\lambda\alpha)-\alpha(1-\lambda)]\frac{(n-1)^lm^{n-1}}{(n-1)!}\\
      &=&e^{-m}\sum_{n=2}^{\infty}[(1-\lambda\alpha)(n-1)^2+(2-\alpha\lambda-\alpha)(n-1)+1-\alpha]\frac{(n-1)^lm^{n-1}}{(n-1)!}\\
  &=&e^{-m}\sum_{n=2}^{\infty}[(1-\lambda\alpha)\frac{(n-1)^{l+2}m^{n-1}}{(n-1)!}+(2-\alpha\lambda-\alpha)\frac{(n-1)^{l+1}m^{n-1}}{(n-1)!}+(1-\alpha)\frac{(n-1)^lm^{n-1}}{(n-1)!}]\\
&=&{e^{- m}}\left\{ {\begin{array}{*{20}{c}}
   {\left( 1 - \alpha \lambda \right)\mu _{l + 2}^{'}+\left( 2- \alpha \lambda-\alpha \right)\mu _{l + 1}^{'} + \left( 1 - \alpha  \right)\mu _{l}^{'},~~~ l \ge 1}  \\
   {\left( {1 - \alpha \lambda } \right)(m^2+m){e^m} +\left( 2- \alpha \lambda-\alpha \right)me^m+ \left( {1 - \alpha } \right)\left( {{e^{m}} - 1} \right), ~~~ l = 0}  \\
 \end{array} } \right.\end{eqnarray*}

But this expression is bounded above by $\alpha-1$ if and only if
\eqref{c2} holds. Thus the proof is complete.\end{proof}


\setcounter{equation}{0}
\section{Inclusion Properties}
\par A function $f \in \cA$ is said to be in the class $\mathcal{R}
^{\tau } (A, B)$, $(\tau \in \mC \backslash \{0\},\,\, -1 \leq B <
A \leq 1)$,  if it satisfies the inequality
\begin{equation*}
  \left|\ds  \frac{f^\prime(z)  - 1} {\ds (A-B)\tau - B[f^\prime(z) -
  1]} \right| < 1\ \ \ (z \in \mU).
\end{equation*}
The class $\mathcal{R} ^{\tau } (A, B)$ was introduced earlier by
Dixit and Pal \cite{dix}.\\ It is of interest to note that if
\begin{equation*}
   \tau = 1,\,\, A = \beta \mbox{ and }\,\, B = -\beta \,\,(0 <
   \beta \leq 1),
\end{equation*} we obtain the class of functions $f
\in \cA$ satisfying the inequality
\begin{equation*}
   \left| \frac{f^\prime(z)-1} {f^\prime(z)+1} \right| < \beta \quad (z
   \in \mU)
\end{equation*}
which was studied by
(among others) Padmanabhan \cite{padma} and  Caplinger and Causey
\cite{cap}.
\begin{lemma}\cite{dix}\label{s2th4} If
$f\in \mathcal{R} ^{\tau } (A,B)$ is of form \eqref{eq1.1}, then
\begin{equation}\label{dixcondi}
   \left| a_{n} \right| \leq (A-B)\frac{\left| \tau \right| }{n}, \quad
   n \in \mathbb{N} \setminus\{1\}.
\end{equation}
The result is sharp.
\end{lemma}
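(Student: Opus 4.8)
The plan is to recognize that membership in $\mathcal{R}^{\tau}(A,B)$ is, after solving the defining inequality, a subordination whose dominant is a convex univalent function, and then to read off the coefficient estimate from the classical Rogosinski inequality for subordinate power series. First I would reformulate the definition. Since $f'(0)=1$, the auxiliary function
\[
\phi(z)=\frac{f'(z)-1}{(A-B)\tau-B[f'(z)-1]}
\]
is analytic in $\mathbb{U}$, satisfies $\phi(0)=0$ and $|\phi(z)|<1$, i.e.\ it is a Schwarz function. Solving the defining relation for $f'(z)-1$ gives
\[
f'(z)-1=\frac{(A-B)\tau\,\phi(z)}{1+B\phi(z)},
\]
the denominator being nonzero because $-1\le B<A\le 1$ forces $|B|\le 1$, so $|B\phi(z)|<1$. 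Equivalently, since $\phi$ is a Schwarz function and both sides vanish at the origin,
\[
\frac{f'(z)-1}{(A-B)\tau}\prec\frac{z}{1+Bz}=:F(z),\qquad z\in\mathbb{U}.
\]

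Next I would verify that the dominant $F(z)=z/(1+Bz)$ is convex univalent. As a M\"obius transformation with $F(0)=0$ and pole at $z=-1/B$ satisfying $|1/B|\ge 1$ (because $-1\le B<1$), $F$ maps $\mathbb{U}$ one-to-one onto a disk when $|B|<1$ and onto a half-plane when $B=-1$; each is convex, and the case $B=0$ gives $F(z)=z$, trivially convex. Hence $F$ is convex univalent, with expansion $F(z)=z+\sum_{n\ge 2}(-B)^{n-1}z^{n}$, so the modulus of its leading coefficient equals $1$.

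Finally I would apply Rogosinski's theorem: if $g\prec F$ with $F$ convex univalent, then every Taylor coefficient of $g$ is bounded in modulus by the modulus of the leading coefficient of $F$. Taking $g(z)=\dfrac{f'(z)-1}{(A-B)\tau}=\sum_{n\ge 1}\dfrac{(n+1)a_{n+1}}{(A-B)\tau}\,z^{n}$ yields $\left|\dfrac{(n+1)a_{n+1}}{(A-B)\tau}\right|\le 1$ for all $n\ge 1$, that is, $|a_{n}|\le (A-B)|\tau|/n$ for $n\in\mathbb{N}\setminus\{1\}$. Sharpness is exhibited by the Schwarz function $\phi(z)=z^{\,n-1}$, for which the $z^{\,n-1}$-coefficient of $f'(z)-1$ equals $(A-B)\tau$ and therefore $|a_{n}|=(A-B)|\tau|/n$.

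The step requiring the most care, and the one I expect to be the main obstacle, is precisely the convexity input. A naive comparison of power-series coefficients followed by a triangle-inequality estimate does not produce the sharp constant, and a plain Parseval/area estimate on the defining inequality only gives a bound degrading like $(1-B^{2})^{-1/2}$; the sharp factor $1/n$ genuinely depends on the convexity of the dominant through Rogosinski's inequality. Thus the crux is to identify $z/(1+Bz)$ correctly as convex univalent, including the boundary case $B=-1$ where its image is an unbounded half-plane, and to justify the subordination rigorously from the Schwarz-function representation.
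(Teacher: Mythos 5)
The paper does not prove this lemma; it is quoted verbatim from Dixit and Pal \cite{dix}, so there is no in-paper argument to compare yours against. Your proof is correct and is essentially the standard route to this estimate: solving the defining inequality for $f'(z)-1$ via a Schwarz function $\phi$ gives $\bigl(f'(z)-1\bigr)/\bigl((A-B)\tau\bigr)=F(\phi(z))$ with $F(w)=w/(1+Bw)$, hence a subordination to the convex univalent dominant $F$ (the pole $-1/B$ lies outside $\mathbb{U}$ or on its boundary since $-1\le B<1$), and Rogosinski's inequality then bounds the $z^{n-1}$-coefficient $n a_n/\bigl((A-B)\tau\bigr)$ of the subordinate function by $|F'(0)|=1$, which is exactly \eqref{dixcondi}; the choice $\phi(z)=z^{n-1}$ shows sharpness. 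The only nitpick is cosmetic: Rogosinski's theorem bounds the coefficients of index $n\ge 1$ (not the constant term) by $|A_1|$, which is all you use since $g(0)=0$ here.
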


Making use of the  Lemma \ref{s2th4} we will study the action of the Poissons distribution series
 on the class $\mathcal{M}(\lambda,\alpha).$
\begin{theorem}\label{thm2}
Let  $  m>0(m\neq0,-1,-2,\dots),$ $l\in N_{0}$.  If $ f \in
\mathcal{R}^{\tau}(A,B)$,then  $ \mathcal{I}(l,m,z)f \in \mathcal{N}^*(\lambda,\alpha)$ if and only if
\begin{equation}\label{e0.5}
  {(A-B)|\tau|e^{- m}}\left\{ {\begin{array}{*{20}{c}}
   {\left( 1 - \alpha \lambda \right)\mu _{l + 1}^{'} + \left( 1 - \alpha  \right)\mu _{l}^{'},~~~ l \ge 1}  \\
   {\left( {1 - \alpha \lambda } \right)m{e^m} + \left( {1 - \alpha } \right)\left( {{e^{m}} - 1} \right), ~~~ l = 0}  \\
 \end{array} } \right.\\
\le \alpha-1.
\end{equation}

\end{theorem}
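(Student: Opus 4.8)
The plan is to reduce the membership assertion to a coefficient inequality through Lemma \ref{gms1b}, and then to control the coefficients of $\mathcal{I}(l,m,z)f$ by the sharp bound of Lemma \ref{s2th4}. Writing $\mathcal{I}(l,m,z)f = z + \sum_{n=2}^{\infty} A_n z^n$ with $A_n = \frac{(n-1)^l m^{n-1}}{(n-1)!}e^{-m} a_n$, Lemma \ref{gms1b} says that the relevant quantity governing membership in $\mathcal{N}^*(\lambda,\alpha)$ is the sum $\sum_{n=2}^{\infty} n[n-(1+n\lambda-\lambda)\alpha]|A_n|$, so the first step is to show this sum does not exceed $\alpha-1$.

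Next I would substitute the explicit form of $A_n$ and invoke Lemma \ref{s2th4}, which supplies $|a_n| \le (A-B)|\tau|/n$ for $f\in\mathcal{R}^{\tau}(A,B)$. The decisive feature is that the factor $n$ produced by Lemma \ref{gms1b} cancels exactly against the $1/n$ in this estimate. After the cancellation the sum collapses to
\[
(A-B)|\tau|\,e^{-m}\sum_{n=2}^{\infty}[n-(1+n\lambda-\lambda)\alpha]\frac{(n-1)^l m^{n-1}}{(n-1)!},
\]
which is precisely $(A-B)|\tau|$ times the series already evaluated in the proof of Theorem \ref{gms1a}.

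I would then reuse that evaluation verbatim: the rewriting $n(1-\lambda\alpha)-\alpha(1-\lambda)=(n-1)(1-\lambda\alpha)+(1-\alpha)$, the index shift $n-1\mapsto n$, and the identification of the resulting series with the Poisson moments $\mu'_{l+1}$ and $\mu'_l$, with the case $l=0$ handled separately through the closed forms $\sum m^{n-1}/(n-2)!=me^m$ and $\sum m^{n-1}/(n-1)!=e^m-1$. This shows the whole expression equals the left-hand side of \eqref{e0.5}, so the coefficient inequality holds exactly when \eqref{e0.5} holds, which is the sufficiency direction. For necessity I would test the chain of inequalities on the extremal function for which Lemma \ref{s2th4} is sharp, namely the one attaining $|a_n|=(A-B)|\tau|/n$; for that function each inequality becomes an equality, forcing \eqref{e0.5}.

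The routine portion is the series manipulation, which duplicates Theorem \ref{gms1a} and need not be repeated in detail. The point deserving genuine care is the cancellation argument together with a sign subtlety: since $\mathcal{N}^*(\lambda,\alpha)=\mathcal{N}(\lambda,\alpha)\cap\mathcal{V}$ carries the requirement $A_n\ge 0$, whereas the estimate above really bounds $\sum_{n=2}^{\infty} n[n-(1+n\lambda-\lambda)\alpha]|A_n|$, one should either assume the coefficients of $f$ (hence of $\mathcal{I}(l,m,z)f$) to be nonnegative or read the conclusion as verification of the analytic criterion via the triangle inequality. I expect this bookkeeping over signs, rather than any calculation, to be the main obstacle to a fully rigorous rendering.
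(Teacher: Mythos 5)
Your proposal follows the paper's proof essentially verbatim: reduce via Lemma \ref{gms1b} to the coefficient sum $\sum_{n\ge 2} n[n-(1+n\lambda-\lambda)\alpha]|A_n|$, cancel the factor $n$ against the bound $|a_n|\le (A-B)|\tau|/n$ from Lemma \ref{s2th4}, and reuse the moment computation of Theorem \ref{gms1a} to arrive at the left-hand side of \eqref{e0.5}. You actually go slightly beyond the paper, which proves only the sufficiency direction and silently ignores the sign issue you flag (Lemma \ref{gms1b} is stated for functions in $\mathcal{V}$, while $f\in\mathcal{R}^{\tau}(A,B)$ need not have nonnegative coefficients); your caution about necessity is also warranted, since the extremal functions for Lemma \ref{s2th4} attain equality for one index $n$ at a time, so no single $f$ realizes all the bounds simultaneously.
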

\begin{proof}
Let $f$ be of the form \eqref{eq1.1} belong to the class
$\mathcal{R}^{\tau}(A,B).$ By virtue of Lemma \ref{gms1b}, it
suffices to show that
\begin{equation*}
   \sum_{n=2}^{\infty} n [n-(1+n\lambda-\lambda)\alpha]\frac{(n-1)^lm^{n-1}}{(n-1)!}|a_n|\leq \alpha-1.
\end{equation*} Let
Now
\begin{eqnarray*}
&~& e^{-m}\sum_{n=2}^{\infty} [n(1-\lambda\alpha)-\alpha(1-\lambda)]\frac{(n-1)^lm^{n-1}}{(n-1)!}|a_n|\\
      &\le(A-B)|\tau|&e^{-m}\sum_{n=2}^{\infty}[(n-1)(1-\lambda\alpha)+1-\alpha]\frac{(n-1)^lm^{n-1}}{(n-1)!}\\
  &=&(A-B)|\tau|e^{-m}\sum_{n=1}^{\infty}[n(1-\lambda\alpha)+1-\alpha]\frac{n^lm^{n}}{n!}\\
&=&(A-B)|\tau|e^{-m}\sum_{n=1}^{\infty}[(1-\lambda\alpha)\frac{n^{l+1}m^{n}}{n!}+(1-\alpha)\frac{n^lm^{n}}{n!}]\\
&=&{(A-B)|\tau|e^{- m}}\left\{ {\begin{array}{*{20}{c}}
   {\left( 1 - \alpha \lambda \right)\mu _{l + 1}^{'} + \left( 1 - \alpha  \right)\mu _{l}^{'},~~~ l \ge 1}  \\
   {\left( {1 - \alpha \lambda } \right)m{e^m} + \left( {1 - \alpha } \right)\left( {{e^{m}} - 1} \right), ~~~ l = 0}  \\
 \end{array} } \right.\\
&\le& \alpha-1.\end{eqnarray*}

 \end{proof}
\begin{theorem}\label{SMJ1}
Let  $  m>0(m\neq0,-1,-2,\dots)$, $l\in N_{0}$ then
$\mathcal{L}(l,m,z)=\int_{0}^{z}\frac{\mathcal{I}(l,m,t)}{t}dt$ is in
$\mathcal{N}^*(\lambda,\alpha)$ if and only if inequality \eqref{c1} is satisfied.
\end{theorem}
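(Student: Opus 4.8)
The plan is to reduce the assertion to the coefficient inequality already established in Theorem \ref{gms1a}. The decisive observation is that the integration defining $\mathcal{L}$ inserts a factor $1/n$ into each coefficient, and this factor exactly cancels the additional factor $n$ that distinguishes the convexity criterion of Lemma \ref{gms1b} from the starlikeness criterion of Lemma \ref{gms1}.

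First I would expand $\mathcal{L}(l,m,z)$ as a power series. Reading $\mathcal{I}(l,m,t)$ as the series $\Phi_m^l(t)$ of \eqref{PHI}, we have
\[
\frac{\mathcal{I}(l,m,t)}{t}=1+\sum_{n=2}^{\infty}\frac{(n-1)^l m^{n-1}}{(n-1)!}\,e^{-m}\,t^{\,n-1}.
\]
Since this series has infinite radius of convergence, term-by-term integration from $0$ to $z$ is justified and yields
\[
\mathcal{L}(l,m,z)=z+\sum_{n=2}^{\infty}\frac{(n-1)^l m^{n-1}}{n\,(n-1)!}\,e^{-m}\,z^{n}.
\]
In particular $\mathcal{L}(l,m,z)\in\mathcal{V}$, with nonnegative coefficients $a_n=\dfrac{(n-1)^l m^{n-1}}{n\,(n-1)!}\,e^{-m}$, so Lemma \ref{gms1b} applies.

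Next, by Lemma \ref{gms1b} the function $\mathcal{L}(l,m,z)$ lies in $\mathcal{N}^*(\lambda,\alpha)$ if and only if
\[
\sum_{n=2}^{\infty}n\,[\,n-(1+n\lambda-\lambda)\alpha\,]\,a_n\le\alpha-1.
\]
Substituting $a_n$ and cancelling the factor $n$ against the $1/n$ produced by integration leaves
\[
e^{-m}\sum_{n=2}^{\infty}[\,n-(1+n\lambda-\lambda)\alpha\,]\,\frac{(n-1)^l m^{n-1}}{(n-1)!},
\]
which is precisely the sum evaluated in the proof of Theorem \ref{gms1a}. There it was shown, by reindexing $n\mapsto n-1$ and splitting into the moments $\mu_{l+1}'$ and $\mu_{l}'$ (with the corresponding closed forms $m e^m$ and $e^m-1$ when $l=0$), to equal the left-hand side of \eqref{c1}. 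Hence the membership condition for $\mathcal{L}(l,m,z)$ is exactly \eqref{c1}, in both directions.

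I expect no genuine obstacle beyond bookkeeping: the only point requiring care is verifying that the factor $n$ from Lemma \ref{gms1b} cancels cleanly, after which the argument is word-for-word that of Theorem \ref{gms1a}. A minor technical check is the legitimacy of term-by-term integration, which is immediate from the infinite radius of convergence noted after \eqref{PHI}.
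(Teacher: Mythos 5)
Your proof is correct and follows essentially the same route as the paper: expand $\mathcal{L}(l,m,z)$ termwise, apply the coefficient criterion of Lemma \ref{gms1b}, cancel the factor $n$ against the $1/n$ from integration, and observe that the resulting sum is exactly the one evaluated in Theorem \ref{gms1a}. Your added remarks (justifying term-by-term integration and noting $\mathcal{L}\in\mathcal{V}$) only make explicit what the paper leaves implicit.
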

\begin{proof}
Since
\begin{equation*}
   \mathcal{L}(l,m,z)=
   z+\sum_{n=2}^{\infty}\frac{(n-1)^lm^{n-1}}{(n-1)!}e^{-m}~\frac{z^n}{n}.
\end{equation*}
By virtue of Lemma \ref{gms1a}, it
suffices to show that
\begin{equation*}
   \sum_{n=2}^{\infty}  n[n-(1+n\lambda-\lambda)\alpha]
   \frac{(n-1)^lm^{n-1}}{n(n-1)!}e^{-m}\leq\alpha-1.
\end{equation*}
Now,
\begin{eqnarray*}
\sum_{n=2}^{\infty}   n[n-(1+n\lambda-\lambda)\alpha]
    \frac{(n-1)^lm^{n-1}}{n(n-1)!}e^{-m}=\sum_{n=2}^{\infty} [n-(1+n\lambda-\lambda)\alpha] \frac{(n-1)^lm^{n-1}}{(n-1)!}e^{-m}.\\
\end{eqnarray*}
Proceeding as in Theorem \ref{gms1a} we obtain the required result.
 \end{proof}
\section{Open Problems}
It is interesting to find the result of Theorems \ref{gms1a}-\ref{SMJ1}, when $l$ is a real number.




\begin{thebibliography}{25}
\setlength{\itemsep}{3pt}
\bibitem{00} K. Al-Shaqsi, On inclusion results of certain subclasses of analytic functions associated with
generating function, AIP Conference Proceedings 1830, 070030 (2017); doi: 10.1063/1.4980979.
\bibitem{01} E. Bell, Annal Math.,{\bf 35}(1934), 258-277.
\bibitem{02}  K.N. Boyadzhiev, Exponential polynomials, Stirling numbers,
and evaluation of some Gamma integrals, Abstract Appl. Anal., Vol. (2009), Art. ID 168672, 1-18.
\bibitem{cap} T. R. Caplinger and W. M. Causey, A class of univalent
functions, Proc. Amer. Math. Soc.  39(1973), 357-361.

\bibitem{cho} N. E. Cho, S. Y. Woo and S. Owa, Uniform convexity properties for hypergeometric functions,
Fract. Cal. Appl. Anal., 5(3) (2002), 303-313.

\bibitem{deBranges} L. de Branges, A proof of the Bieberbach conjecture, Acta Math. 154(1985), 137--152.
\bibitem{DC} Dixit, K. K., Chandra, V., 2008. On subclass of univalent functions with positive
coefficients, Aligarh Bull. Math., 27(2), pp. 87-93.

\bibitem{dix} K. K. Dixit and S. K. Pal, On a class of
univalent functions related to complex order, Indian J. Pure Appl.
Math. 26(9)(1995), 889-896.
\bibitem{Dp} Dixit, K. K., Pathak, A. L., 2003. A new class of analytic functions with positive coefficients, Indian J. Pure Appl. Math., 34(2), pp. 209-218.
\bibitem{merkes} E. Merkes and B. T. Scott, Starlike hypergeometric functions, Proc. Amer. Math. Soc., 12
(1961), 885-888.

\bibitem{mostafa} A. O. Mostafa, A study on starlike and convex properties for hypergeometric functions, J.
Inequal. Pure Appl. Math., 10(3) (2009), Art., 87, 1-16.

\bibitem{gmskvasp}G. Murugusundaramoorthy , K. Vijaya and S. Porwal, Some inclusion results of certain subclass of analytic
functions associated with Poisson distribution series,Doi: 10.15672/HJMS.20164513110

\bibitem{padma} K. S. Padmanabhan, On a certain class of
functions whose derivatives have a positive  real part in the unit
disc, Ann. Polon. Math. 23(1970), 73-81.


\bibitem{sp1}	S. Porwal, An application of a Poisson distribution series on certain analytic functions,
 J. Complex Anal., Vol.(2014), Art. ID 984135, 1-3.
\bibitem{03}	S. Roman, The Umbral calculus, Dover 1984.
\bibitem{silver} H. Silverman, Starlike and convexity properties for hypergeometric functions, J. Math.
Anal. Appl., 172 (1993), 574-581.
\bibitem{04}	Y. Simsek, Appl. Math., {\bf 5}(2014), 1091-1098.
\bibitem{HMSSS} H. M. Srivastava, G. Murugusundaramoorthy and S. Sivasubramanian,
Hypergeometric functions in the parabolic starlike and uniformly
convex domains, Integral Transform Spec. Funct. 18(2007), 511-520.
\bibitem{05} J. Touchard, Sur les cycles des substitutions, Acta Math., {\bf 70}(1939), 243-297.
\bibitem{ural1} Uralegaddi B. A., Ganigi M. D.,  Sarangi S. M.,1994 { Univalent functions with positive
coefficients,} Tamkang J. Math., 25, 225-230.
\end{thebibliography}
\end{document}